\documentclass{elsarticle}

\usepackage{amsthm, amsmath, amssymb, amscd,  amsfonts, amstext,
  amsbsy, mathrsfs}

\newcommand{\pow}{\ensuremath{\mathscr{P}}}
\newcommand{\RR}{\ensuremath{\mathbb{R}}}
\newcommand{\Bai}{\ensuremath{{}^\omega \omega}}
\newcommand{\Can}{\ensuremath{{}^\omega 2}}
\newcommand{\AD}{\ensuremath{{\rm \mathsf{AD}}}}
\newcommand{\ZF}{\ensuremath{{\rm \mathsf{ZF}}}}
\newcommand{\DC}{\ensuremath{{\rm \mathsf{DC}}}}
\newcommand{\ACOR}{\ensuremath{{\rm \mathsf{AC}_\omega(\mathbb{R})}}}
\newcommand{\SLOL}{\ensuremath{{\rm \mathsf{SLO^L}}}}
\newcommand{\SLO}{\ensuremath{{\rm \mathsf{SLO}}}}
\newcommand{\ax}[1]{\ensuremath{{\rm \mathsf{#1}}}}
\newcommand{\imp}{\Rightarrow}
\newcommand{\fhi}{\varphi}
\newcommand{\bSigma}{\mathbf{\Sigma}}
\newcommand{\bPi}{\mathbf{\Pi}}
\newcommand{\bGamma}{\ensuremath{\mathbf{\Gamma}}}
\newcommand{\bDelta}{\mathbf{\Delta}}
\newcommand{\bLambda}{\mathbf{\Lambda}}
\newcommand{\bN}{\mathbf{N}}
\newcommand{\bdelta}{\boldsymbol{\delta}}
\newcommand{\F}{\mathcal{F}}
\newcommand{\G}{\mathcal{G}}
\newcommand{\Lip}{\ensuremath{{\mathsf{Lip}}}}
\newcommand{\Bor}{\ensuremath{{\mathsf{Bor}}}}
\renewcommand{\L}{\ensuremath{{\mathsf{L}}}}
\newcommand{\restr}[2]{#1 \restriction #2}
\newcommand{\seq}[2]{\langle #1 \mid  #2 \rangle}
\newcommand{\onto}{\twoheadrightarrow}

\newtheorem{theorem}{Theorem}[section]
\newtheorem{lemma}[theorem]{Lemma}
\newtheorem{proposition}[theorem]{Proposition}
\theoremstyle{definition}
\newtheorem{defin}{Definition}

\begin{document}

\title{Beyond Borel-amenability: scales and superamenable reducibilities}
\author{L.\ Motto Ros\fnref{thanks}\corref{tel}}
\date{\today}
\ead{luca.mottoros@libero.it}
\address{Kurt G\"odel Research Center for Mathematical Logic, University of Vienna, W\"ahringer Stra{\ss}e 25,
 A-1090 Vienna,
Austria}
\fntext[thanks]{The author would like to thank the FWF (Austrian Research Fund) for generously supporting this research throught Project number P 19898-N18.}
\cortext[tel]{Tel.: +43 1 427750508; fax: +43 1 427750599.}
\begin{keyword}
Determinacy \sep Wadge hierarchy

\MSC 03E15 \sep 03E60
\end{keyword}

\begin{abstract}
We analyze the degree-structure induced by large reducibilities under the Axiom of Determinacy.
This generalizes the analysis of Borel reducibilities given in \cite{andrettamartin},  \cite{mottorosborelamenability} and  \cite{mottorosbaire} e.g.\ to the projective levels.
\end{abstract}

\maketitle

\section{Introduction}
Given a set of functions $\F$ from\footnote{As usual in Descriptive Set Theory we will always identify $\RR$ with the Baire space $\Bai$.} $\RR$ into itself (also called a \emph{reducibility}), we say that $A,B \subseteq \RR$ are \emph{$\F$-equivalent} if each of them is the $\F$-preimage of the other one, and call \emph{$\F$-degree} of $A$  the collection of all sets $\F$-equivalent to $A$: our main goal is to study the structure of the $\F$-degrees for various $\F$.
Building on the work of Andretta and Martin in \cite{andrettamartin} (where the case when $\F$ is the set of all Borel functions was considered), in \cite{mottorosborelamenability} and \cite{mottorosbaire} we have investigated various reducibility notions in the Borel context, but it is clear that there are also some natural sets of functions (such as projective functions) that can be used as reductions and which are strictly larger than the set of Borel functions. In this paper we will prove that, assuming $\AD+\DC$, structural results similar to those for the Borel context can be proved for larger and larger pointclasses. In particular, we will determine the degree-structure induced by the collection $\F_\bGamma$ of all \emph{$\bGamma$-functions} (i.e.\ of those functions with the property that the preimage of a set in $\bGamma$ is still in $\bGamma$) in case $\bGamma$ is a boldface pointclass which is closed under projections, countably intersections and unions, and which has the scale and the uniformization property (under $\AD$ these $\bGamma$'s coincide with the so-called \emph{tractable pointclasses} --- see Section \ref{sectionexistentialpointclasses}).

The existence of such pointclasses is strictly related to the axioms
one is willing to accept. For example, in $\ZF+\ACOR$ the only known tractable
pointclass is $\bSigma^1_2$, but in general the stronger the axioms one is
willing to adopt,
the greater number of tractable pointclasses one gets
(see \cite{mosch}, \cite{martinsteel} and \cite{woodin} for the results quoted below):
\begin{enumerate}
\item 
$\ax{Det}(\bDelta^1_{2n})$ implies that there are at least $n+1$ tractable pointclasses, namely
$\bSigma^1_2, \dotsc, \bSigma^1_{2n+2}$. In particular, Projective Determinacy $\ax{Det}(\bigcup_n\bDelta^1_n)$ implies that
each pointclass $\bSigma^1_{2n+2}$ is tractable.
A similar result holds for the even levels of the $\sigma$-projective pointclass;

\item if $\lambda$ is an ordinal of uncountable 
cofinality and 
 $\seq{\bGamma_\xi}{\xi<\lambda}$ is a
chain of tractable pointclasses (i.e.\ $\bGamma_\xi \subsetneq
\bGamma_{\xi'}$ for $\xi < \xi'$), then the pointclass
$\bGamma = \bigcup_{\xi<\lambda} \bGamma_\xi$ is
tractable as well.
In particular, $\sigma$-Projective Determinacy $\ax{Det}(\bigcup_{\xi<\omega_1}\bDelta^1_\xi)$ implies that  the pointclass of all  $\sigma$-projective sets is tractable (while the pointclass of all projective sets is \emph{not} tractable);

\item Hyperprojective Determinacy $\ax{Det}(\mathbf{HYP})$ implies that the collection of all inductive sets is tractable;

\item if $\delta$ is limit
 of Woodin cardinals then $\bGamma^H_{<\delta}$, the collection of all $\xi$-weakly
homogeneously Suslin sets (for any $\xi <\delta$), is a
tractable
pointclass;

\item 
assuming $\AD + \ax{V =  L(\RR)}$, the
pointclass $\bSigma^2_1$ is scaled (hence it has also the uniformization
property by closure under coprojections), but 
if $\ax{V= L(\RR)}$ and there is no wellordering of the reals
then there is a $\bPi^2_1$ subset of $\RR^2$ that can not be uniformized (by
\emph{any} set in $\RR^2$): thus, if we assume $\AD + \ax{V = L(\RR)}$, we get that $\bSigma^2_1$
is the \emph{maximal}
tractable pointclass;

\item in contrast with the previous point,
Woodin has shown that $\AD_\RR$ implies that \emph{every} set
of reals has a scale, and this in turn implies, by previous work
of Martin, that there are nonselfdual scaled pointclasses with reasonable closure properties which lie arbitrarily high in
the Wadge ordering: thus, in particular, under $\AD_\RR$ there are
tractable pointclasses of arbitrarily high complexity.
\end{enumerate}

All these examples show that our arguments allow to determine the degree-structures induced by larger and larger sets of reductions (assuming corresponding determinacy axioms). Nevertheless we have to point out that at the moment we are able to deal e.g.\ with $\bSigma^1_{2n}$-reductions but not with $\bSigma^1_{2n+1}$-reductions (for $n>0$). This asymmetry arises from the zig-zag pattern of the regularity properties given by Moschovakis' Periodicity Theorems, and reflects a phenomenon which is quite common in the $\AD$ context: for instance, in \cite{vanwesepwadgedegrees} it was shown that the order type of the $\bDelta^1_{2n}$ degrees is exactly $\bdelta^1_{2n+1}$, but no exact evaluation of the order type of $\bDelta^1_{2n+1}$ has been given so far (apart from the inequalities $\bdelta^1_{2n+1} < \text{order type of }\bDelta^1_{2n+1} < \bdelta^1_{2n+2}$). 

Another important feature of large reductions is that to have our structural results we always need the full $\AD$, as we have to use the Moschovakis' Coding Lemmas: this should be contrasted with the Borel case, in which the determinacy axioms were used only in a local way. We finish this introduction by aknowledging our debt to A.~Andretta and D.~A.~Martin for their \cite{andrettamartin} and for the simple but crucial suggestion of using scales (instead of changes of topology) in the present setup.

\section{Basic facts and superamenability}\label{sectionsuperamenability}

We will firstly recall some definitions and basic facts for the reader's convenience.
For all undefined symbols, terminology, and for the proofs omitted here we refer the reader to \cite{kechris}, \cite{mosch} and \cite{mottorosborelamenability}.
If $\bGamma \subseteq \pow(\RR)$ is any boldface pointclass, we say
that the surjection $\fhi \colon P \onto \lambda$ is a \emph{$\bGamma$-norm} if there
are
relations $\leq^\fhi_\bGamma$ and $\leq^\fhi_{\breve{\bGamma}}$ in $\bGamma$ and
$\breve{\bGamma}$, respectively, such that for every $y \in P$ and every $x \in
\RR$
\[x \in P \wedge \fhi(x) \leq \fhi(y) \iff x \leq^\fhi_\bGamma y \iff
x \leq^\fhi_{\breve{\bGamma}} y.\]
To each norm $\fhi$ we can associate the prewellordering (i.e.\ the transitive, reflexive, connected and well-founded relation) $\leq^\fhi$ defined by $x \leq^\fhi y \iff \fhi(x) \leq \fhi(y)$ 
(for every $x,y \in P$).
A pointclass $\bGamma$ is said to be \emph{normed} if every $P \in \bGamma$ admits a $\bGamma$-norm.
In this case, if $\bGamma$ is a boldface pointclass closed under finite
intersections and unions, then $\bGamma$ has the reduction property while
 $\breve{\bGamma}$ has the separation property, and if $\fhi$ is a
$\bGamma$-norm on a set $D \in \bDelta_\bGamma$ then $\leq^\fhi$ is in
$\bDelta_\bGamma$.
Moreover, we can define
\[ \bdelta_\bGamma = \sup\{ \xi \mid \xi \text{ is the length of a
prewellordering of } \RR \text{ which is in }
\bDelta_\bGamma\}\]
(clearly $\bdelta_\bGamma < (2^{\aleph_0})^+$ for every $\bGamma \subsetneq
\pow(\RR)$).
If $\bGamma$ is closed under coprojections,
countable intersections and countable unions, then there is a regular
$\bGamma$-norm with length $\bdelta_\bGamma$: this implies that for every pointclass $\bLambda$
\begin{equation}\label{eq1}
\bGamma \cup \breve{\bGamma}
\subseteq \bLambda \imp \bdelta_\bGamma < \bdelta_\bLambda.
\end{equation}

A \emph{$\bGamma$-scale} on $P \subseteq \RR$ is  a sequence $\vec{\fhi} = \seq{\fhi_n}{n \in \omega}$ of norms on $P$ such that
\begin{enumerate}
 \item if $x_0, x_1, \dotsc \in P$, $\lim_i x_i = x$ for some $x$, and for each $n$ we have $\lim_i \fhi_n(x_i) = \lambda_n$ for some ordinal $\lambda_n$, then $x \in P$ and $\fhi_n(x) \leq \lambda_n$ for each $n$;

\item there are relations $S_\bGamma(n,x,y)$ and $S_{\breve{\bGamma}}(n,x,y)$ in
$\bGamma$ and $\breve{\bGamma}$ respectively such that for every $y \in P$,
every $n \in \omega$ and every $x \in \RR$
\[ x \in P \wedge \fhi_n(x) \leq \fhi_n(y) \iff S_\bGamma(n,x,y) \iff
S_{\breve{\bGamma}}(n,x,y).\]
\end{enumerate}

If every set in $\bGamma$ admits a $\bGamma$-scale we say that the
 pointclass $\bGamma$ is \emph{scaled}, and in this case if 
$\bGamma$ is closed under finite intersections and unions we can also require that on each $P \in \bGamma$ there is a $\bGamma$-scale such that
\begin{equation}\label{verygood}
\tag{$\star$} \begin{split}
& \text{if }x_0, x_1, \dotsc \in P\text{ and for each }n\text{ we have }\lim\nolimits_i \fhi_n(x_i) = \lambda_n\text{ for some }\lambda_n\text{, then}\\ & \text{\emph{there exists some $x \in P$ for which $\lim\nolimits_i x_i = x$}}.
\end{split}
\end{equation}

If $\bGamma$ is scaled and closed under coprojections then $\bGamma$ has the
\emph{uniformization
property}, i.e.\ for every $P \subseteq \RR
\times \RR$ which is in $\bGamma$ there is some $P^* \subseteq P$
such that  for every $x$ in the projection of $P$ there is a \emph{unique}
$y \in \RR$ that satisfies $(x,y) \in P^*$ (and in this case we will say that
$P^*$ \emph{uniformizes} $P$). The same is true also for the pointclass
$\exists \bGamma = \{A \subseteq \RR \mid A \text{ is the projection of a
set in }\bGamma\}$,
that is $\exists \bGamma$ is scaled and has the uniformization property.

Finally, we want to recall some
results which are consequences of the full $\AD$.

\begin{lemma}[First Coding Lemma]\label{codinglemma}
  Assume $\AD$ and let $<$ be a strict well-founded relation on some $S
\subseteq \RR$ with rank function $\rho\colon S \onto \lambda$.
Moreover, let
$\bGamma \supseteq \bDelta^0_1$
be a pointclass closed under projections, countable unions and
countable intersections, and assume that ${<} \in {\bGamma}$.
Then for every function
$ f \colon \lambda \to \pow(\RR)$
there is a \emph{choice set} $C \in \bGamma$, that is a set $C \subseteq \RR \times \RR$ such that
\begin{enumerate}[i)]
\item $(x,y) \in C \imp x \in S \wedge
y \in f(\rho(x))$,
\item $f(\xi) \neq \emptyset \imp \exists x  \exists y  (\rho(x) =
\xi \wedge 
(x,y) \in C)$.
\end{enumerate}
\end{lemma}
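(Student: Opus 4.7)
My plan is to argue by contradiction, using a Gale--Stewart game and an outer transfinite induction on $\lambda$. I would fix the relation $<$, the rank function $\rho\colon S \onto \lambda$ and the assignment $f\colon\lambda\to\pow(\RR)$, and assume inductively that the lemma holds for every strict well-founded relation of rank $<\lambda$ (the base case $\lambda=0$ being witnessed by $C=\emptyset$). Assuming toward a contradiction that no $C\in\bGamma$ satisfies (i)--(ii), I would fix a universal $\bGamma$-set $V\subseteq\RR\times\RR\times\RR$ for $\bGamma$-subsets of $\RR^2$, write $V_e = \{(x,y)\mid (e,x,y)\in V\}$, and call a $\bGamma$-set $C$ a \emph{$\xi$-choice set} (for $\xi\leq\lambda$) if (i) and (ii) hold with $\lambda$ replaced by $\xi$; the induction hypothesis then provides $\xi$-choice sets for every $\xi<\lambda$, while the failure assumption forbids one at $\xi=\lambda$.

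Next I would consider the game $G$ in which Player~I plays a real $e$, Player~II plays a real $x$, and Player~II wins iff $x\in S$ and $V_e$ is not a $(\rho(x)+1)$-choice set. By $\AD$ some player has a winning strategy. If Player~I wins, the strategy yields a continuous map $x\mapsto e(x)$ such that for every $x\in S$, $V_{e(x)}$ is a $(\rho(x)+1)$-choice set; setting
\[
C := \{(x',y') \mid \exists x\in S\; (x',y')\in V_{e(x)}\},
\]
closure of $\bGamma$ under continuous preimages and projections places $C$ in $\bGamma$, and since $\rho$ is onto $\lambda$ a direct verification shows $C$ satisfies (i) and (ii), contradicting the failure assumption.

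If instead Player~II has a winning strategy $\tau\colon\RR\to\RR$ (continuous), then for every $e\in\RR$ we have $\tau(e)\in S$ and $V_e$ is not a $(\rho(\tau(e))+1)$-choice set. The image $\tau(\RR)$ is $\bSigma^1_1\subseteq\bGamma$, and boundedness applied to the $\bGamma$-well-founded relation $<$ should then give $\beta := \sup\{\rho(\tau(e))\mid e\in\RR\} < \lambda$. The induction hypothesis applied to $f\restriction(\beta+1)$ produces a $(\beta+1)$-choice set $C'\in\bGamma$; choosing $e^*$ with $V_{e^*}=C'$, the set $V_{e^*}$ is a fortiori a $(\rho(\tau(e^*))+1)$-choice set (since $\rho(\tau(e^*))\leq\beta$), contradicting the winning condition of $\tau$ at $e^*$.

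The hardest part will be the boundedness step in the second case: justifying $\beta<\lambda$ requires combining ${<}\in\bGamma$ with the closure of $\bGamma$ under projections and countable operations, so that $\tau(\RR)$ lives in a pointclass admitting the standard boundedness principle for $<$. A secondary point will be verifying in the first case that the assembled set $C$ really lies in $\bGamma$, which should follow from continuity of strategies together with the assumed closure properties of $\bGamma$.
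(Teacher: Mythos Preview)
The paper gives no proof of this lemma: right after the statement it remarks that this formulation is a particular case of Moschovakis' original Coding Lemma and refers to \cite{mosch}. Your proposal is essentially an attempt to reproduce that classical argument, and the overall architecture (induction on $\lambda$, the game between a code $e$ and a point $x$, the union when Player~I wins) is indeed the standard one.

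The step you flag as the crux, however, is not merely hard: as written it is false. Take $\lambda$ a limit ordinal (for $\lambda=\mu+1$ one just adjoins a single pair to a $\mu$-choice set, so the game is not needed). Your own induction hypothesis supplies, for each $\xi<\lambda$, a $\xi$-choice set $V_{e_\xi}\in\bGamma$. A $\xi$-choice set is automatically a $\xi'$-choice set for every $\xi'\le\xi$, so Player~II's winning condition at $e_\xi$ forces $\rho(\tau(e_\xi))+1>\xi$, i.e.\ $\rho(\tau(e_\xi))\ge\xi$. Hence $\beta=\sup_e\rho(\tau(e))=\lambda$, and no boundedness principle can yield $\beta<\lambda$; there is simply no general ``$\Sigma^1_1$-boundedness for $\bGamma$-well-founded relations'' of the kind you are hoping to invoke, and the closure hypotheses on $\bGamma$ do not manufacture one. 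Moschovakis' actual treatment of the II-wins case avoids boundedness altogether: one pulls $<$ back along the strategy via $e_1\prec e_2\iff\tau(e_1)<\tau(e_2)$, obtaining a well-founded relation in $\bGamma$ on all of $\RR$, and applies the induction hypothesis to $\prec$ together with a carefully chosen companion function so as to produce an honest $C\in\bGamma$ covering every level of the form $\rho(\tau(e))$; feeding a code $e^*$ for $C$ back into $\tau$ then contradicts II's winning condition at $e^*$. Since the paper simply defers to \cite{mosch}, you should consult the proof of 7D.5 there for the details. (A minor additional issue: in your Player~I case the displayed set quantifies over $x\in S$, but $S$ is not assumed to lie in $\bGamma$; this is easily repaired by adjusting the payoff for $x\notin S$.)
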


Note that our
formulation of Lemma \ref{codinglemma} is slightly
different from the original  one (due to Moschovakis): nevertheless, one can easily
check that our statement is a  particular case (and hence
a consequence) of the Moschovakis' one.
Using a similar reformulation of the Second Coding Lemma, we get that if $\bGamma \supseteq \bDelta^0_1$ is closed under projections,
countable unions and countable intersections, then $\AD$ implies that
$\bdelta_\bGamma$ is a cardinal of uncountable cofinality.
Thus, in particular, if $\vec{\fhi} = \seq{\fhi_n}{n \in \omega}$
is a scale on a set $D \in
\bDelta_\bGamma$ then there is  $\lambda<\bdelta_\bGamma$ such that $\fhi_n
\colon D \to \lambda$ for every $n \in \omega$.
Moreover we have that $\bigcup_{\xi<\lambda}A_\xi \in \bGamma$ for every $\lambda < \bdelta_\bGamma$ and
every family
$\{A_\xi \mid \xi<\lambda\} \subseteq \bGamma$.

If we assume also $\DC$ we get that for every
$n \in \omega$
\begin{equation}\label{eq2}
 A \in \bSigma^1_{2n+2} \iff A \text{ is the union of } \bdelta^1_{2n+1}
\text{-many sets in }\bDelta^1_{2n+1},
 \end{equation}
where for every $n$ we put $\bdelta^1_n = \bdelta_{\bSigma^1_n} = \bdelta_{\bPi^1_n}$.
On the other hand, 
assuming  $\AD+\DC$ we have that
$ \bDelta^1_{2n+1} = \mathbf{B}_{2n+1}$,
where $\mathbf{B}_{2n+1}$ is the least boldface pointclass which contains
all the open sets and is closed under complementation and unions of length less
then $\bdelta^1_{2n+1}$ (i.e.\ it is the least $\bdelta^1_{2n+1}$-complete algebra of sets
which contains all the open sets). Thus under $\AD+\DC$
the projective sets are completely
determined by the \emph{projective ordinals}
 $\bdelta^1_n$ and the operations of
complementation and ``well-ordered''
union. All these facts together allow us to prove the following simple lemma.

\begin{lemma}\label{lemmanew}
  Assume $\AD+\DC$ and let $n \neq 0$ be an even number. Then $D \in
  \bDelta^1_n$ if and only if  there is a $\bDelta^1_{n-1}$-partition
  $\seq{D_\xi}{\xi<\bdelta^1_{n-1}}$ of $\RR$ such that $D =
  \bigcup_{\xi \in S} D_\xi$ for some $S \subseteq
  \bdelta^1_{n-1}$. 
\end{lemma}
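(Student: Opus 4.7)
The plan is to derive both directions from the characterization \eqref{eq2} of $\bSigma^1_n$, combined with the fact (stated just before the lemma) that under $\AD+\DC$ the pointclass $\bDelta^1_{n-1}$ coincides with $\mathbf{B}_{n-1}$, i.e.\ it is a $\bdelta^1_{n-1}$-complete algebra containing the open sets. Write $n=2m+2$, so $n-1=2m+1$ is odd and \eqref{eq2} applies.

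For the easy ($\Leftarrow$) direction, suppose $\seq{D_\xi}{\xi<\bdelta^1_{n-1}}$ is a $\bDelta^1_{n-1}$-partition of $\RR$ and $D=\bigcup_{\xi\in S}D_\xi$ for some $S\subseteq\bdelta^1_{n-1}$. Then $D$ is a union of $\bdelta^1_{n-1}$-many sets in $\bDelta^1_{n-1}$, so $D\in\bSigma^1_n$ by \eqref{eq2}; and its complement $\RR\setminus D=\bigcup_{\xi\notin S}D_\xi$ has the same form, whence $\RR\setminus D\in\bSigma^1_n$ and therefore $D\in\bDelta^1_n$.

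For the ($\Rightarrow$) direction, let $D\in\bDelta^1_n$ and apply \eqref{eq2} to both $D$ and $\RR\setminus D$ to obtain families $\{A_\xi\}_{\xi<\bdelta^1_{n-1}}$ and $\{B_\xi\}_{\xi<\bdelta^1_{n-1}}$ in $\bDelta^1_{n-1}$ with $D=\bigcup_\xi A_\xi$ and $\RR\setminus D=\bigcup_\xi B_\xi$. Interleave these into one sequence: pick any bijection $\pi\colon\bdelta^1_{n-1}\to\bdelta^1_{n-1}\times 2$ (which exists since $\bdelta^1_{n-1}$ is an infinite cardinal) and set $C_\xi=A_\zeta$ when $\pi(\xi)=(\zeta,0)$, $C_\xi=B_\zeta$ when $\pi(\xi)=(\zeta,1)$. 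Then $\bigcup_{\xi<\bdelta^1_{n-1}}C_\xi=\RR$ and each $C_\xi\in\bDelta^1_{n-1}$, with each $C_\xi$ either contained in $D$ or contained in $\RR\setminus D$. Now refine to a partition by setting
\[D_\xi=C_\xi\setminus\bigcup_{\eta<\xi}C_\eta.\]
The key point is that $\bigcup_{\eta<\xi}C_\eta$ is a union of fewer than $\bdelta^1_{n-1}$ sets in $\bDelta^1_{n-1}$, so by closure of $\mathbf{B}_{n-1}=\bDelta^1_{n-1}$ under $<\bdelta^1_{n-1}$-unions and complements we have $D_\xi\in\bDelta^1_{n-1}$. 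By construction the $D_\xi$ are pairwise disjoint and cover $\RR$, and each $D_\xi\subseteq C_\xi$ lies wholly in $D$ or wholly in $\RR\setminus D$. Taking $S=\{\xi<\bdelta^1_{n-1}\mid D_\xi\subseteq D\}$ yields $D=\bigcup_{\xi\in S}D_\xi$.

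There is no real obstacle here beyond bookkeeping; the only nontrivial ingredient is the closure of $\bDelta^1_{n-1}$ under unions of length $<\bdelta^1_{n-1}$, which ensures the refined pieces $D_\xi$ stay in $\bDelta^1_{n-1}$, and this is exactly what the identification $\bDelta^1_{n-1}=\mathbf{B}_{n-1}$ under $\AD+\DC$ provides (and which crucially requires $n-1$ odd, explaining the parity hypothesis on $n$).
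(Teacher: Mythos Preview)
Your proof is correct and follows essentially the same strategy as the paper's: apply \eqref{eq2} to both $D$ and $\neg D$, then use $\bDelta^1_{n-1}=\mathbf{B}_{n-1}$ to disjointify via $C_\xi\setminus\bigcup_{\eta<\xi}C_\eta$. The only cosmetic difference is that the paper refines each family separately and then concatenates, whereas you interleave first and then refine; the key ingredient in both cases is the closure of $\bDelta^1_{n-1}$ under ${<}\,\bdelta^1_{n-1}$-unions.
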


\begin{proof}
 By \eqref{eq2},  $A \in \bSigma^1_n$ if and only if
  $A$ is the union of a family $\mathcal{B}=\{B_\xi \subseteq \RR \mid
  \xi<\bdelta^1_{n-1}\} \subseteq \bDelta^1_{n-1}$. Since $\bDelta^1_{n-1} = \mathbf{B}_{n-1}$,
  we can refine $\mathcal{B}$ to a pairwise disjoint family with the same properties by defining
  $B'_\xi = B_\xi
  \setminus \bigcup_{\mu<\xi} B_\mu$ for every
  $\xi<\bdelta^1_{n-1}$. Applying
this argument to both $D$ and $\neg D$ we get the result.
The converse is obvious since $\bSigma^1_n$ is closed under well-ordered
unions of length smaller than $\bdelta^1_n$.
\end{proof}

A set of functions $\F$ from $\RR$ into itself is called \emph{set of reductions} if it is closed under composition, contains $\L$ (= the collection of all Lipschitz functions with constant $\leq 1$), and admits a surjection $j \colon \RR \onto \F$. Given such an $\F$, we put $A \leq_\F B$ if and only if $A = f^{-1}(B)$ for some $f \in \F$. Since $\leq_\F$ is a preorder, we can consider the associated equivalence relation $\equiv_\F$ and the corresponding $\F$-degrees $[A]_\F = \{ B \subseteq \RR \mid A \equiv_\F B \}$. Our main goal is to determine the structure of the $\F$-degrees with respect to the preorder induced on them by $\leq_\F$. Notice that under $\AD$ we have the Semi-linear Ordering Principle for $\F$
\begin{equation}
 \tag{$\SLO^\F$} \forall A,B \subseteq \RR( {A \leq_\F B} \vee {B \leq_\F \neg A}).
\end{equation}
As already pointed out in \cite{mottorosborelamenability}, the arguments used to determine the degree-structures induced by  Borel reducibilities (namely changes of topology) cannot be applied outside the Borel context without loosing the crucial property that the new topology is still Polish. Moreover, one can see that the dichotomy countable/uncountable is inadequate when dealing with large reductions, and new ordinals must be involved. The natural choice is to consider the  
 \emph{characteristic
  ordinal} of $\F$
\[ \bdelta_\F = \sup \{ \xi \mid \xi \text{ is the length of a
  prewellordering of } \RR \text{ which is in } \Delta_\F\},\]
where $\Delta_\F = \{ A \subseteq \RR \mid A \leq_\F \bN_{\langle 0 \rangle}\}$ is the \emph{characteristic set} of $\F$.

It is clear that if $\F \subseteq \G$ (or even just if $\Delta_\F
\subseteq \Delta_\G$) then $\bdelta_\F \leq \bdelta_\G$. In general the
converse is not true --- see the observation below.
Using this ordinal we can give the following definition.

\begin{defin}\label{defsuperamenable}
  A set of reductions $\F$ is called \emph{superamenable} if
  \begin{enumerate}[i)]
  \item $\Lip \subseteq \F$, where $\Lip$ is the set of all Lipschitz functions (irrespective of their constant);
  \item for every $\eta < \bdelta_\F$, every $\Delta_\F$-partition\footnote{A \emph{$\Gamma$-partition of a set $A \subseteq \RR$}
  is simply a sequence
$\seq{A_\xi}{\xi<\lambda}$ of pairwise disjoint sets in
$\Gamma$ such that $\lambda$ is
an ordinal and $A = \bigcup_{\xi<\lambda} A_\xi$.}
    $\seq{D_\xi}{\xi < \eta}$ of $\RR$ and every sequence of functions
    $\seq{f_\xi}{\xi < \eta}$ we have that
\[ f = \bigcup\nolimits_{\xi < \eta} (\restr{f_\xi}{D_\xi}) \in \F.\]
  \end{enumerate}
\end{defin}

Superamenability is clearly a natural extension of Borel-amenability as presented in \cite{mottorosborelamenability}, since any set of reductions $\F \subseteq \Bor$ is
Borel-amenable if and
only if it
is superamenable. (This is because 
$\bdelta_\Lip = \bdelta_\Bor
=\bdelta^1_1 = \omega_1$.
To see this, it is clearly enough to show that $\bdelta_\Lip \geq
\omega_1$: let $\alpha < \omega_1$ and $z \in WO_\alpha = \{ w \in \RR \mid
w \text{ codes a wellordering $\leq_z$ of $\omega$ of length } \alpha\}$. Then for every
$x,y \in \RR$ put
\[ x \leq y \iff z(\langle x(0),y(0) \rangle)=1 \iff x(0) \leq_z
y(0).\]
It is clear that this is a prewellordering on $\RR$ of length $\alpha$,
and one can easily check that its image under the
canonical homeomorphism between $\RR^2$ and $\RR$ is in $[\bN_{\langle
0,0 \rangle}]_\L \subseteq \Delta_\Lip$. Thus if $\Lip \subseteq \F
\subseteq \Bor$ then $\bdelta_\F = \omega_1$: in particular, all the
Borel-amenable sets of reductions $\F$ give rise to the same
characteristic ordinal
$\bdelta_\F = \omega_1$, and from this easily follows that  in this case the two definitions coincide.)\\

If we assume $\AD+ \DC$, a particular place among the superamenable
sets of reductions which are  subsets of the \emph{projective
functions}
is occupied by the $\bDelta^1_{2n+2}$-functions
--- see also the next section.

\begin{proposition}
  Assume $\AD+\DC$ and let $\F$ be a superamenable set of
  reductions such that $\Delta_\F$ is a proper subset of the collection of
  the projective sets. Let $n$ be the smallest natural number such
  that  $\Delta_\F
  \subseteq \bDelta^1_n$. If $n$ is even
 then $\Delta_\F =
  \bDelta^1_n$ (and hence also $\bdelta_\F = \bdelta^1_n$).
The same conclusion holds also if $n$ is the smallest natural number
such that $\bdelta_\F \leq \bdelta^1_n$ (and $n$ is even again).
\end{proposition}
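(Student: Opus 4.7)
The plan is to prove the missing inclusion $\bDelta^1_n\subseteq\Delta_\F$ by combining Lemma~\ref{lemmanew} with the gluing clause of superamenability. This reduces to establishing two facts: (a) $\bSigma^1_{n-1}\cup\bPi^1_{n-1}\subseteq\Delta_\F$, and (b) $\bdelta^1_{n-1}<\bdelta_\F$.

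For (a), minimality of $n$ produces some $A\in\Delta_\F\setminus\bDelta^1_{n-1}$. Since $\Delta_\F$ is closed under complement (via the Lipschitz involution swapping $\bN_{\langle 0\rangle}$ and $\bN_{\langle 1\rangle}$) and since superamenability allows gluing along any short $\Delta_\F$-partition of $\RR$, a clopen three-piece gluing over $\bN_{\langle 0\rangle}$, $\bN_{\langle 1\rangle}$, $\RR\setminus(\bN_{\langle 0\rangle}\cup\bN_{\langle 1\rangle})$ yields a set $A_0\in\Delta_\F$ whose two essential sections are (Lipschitz-equivalent copies of) $A$ and $\neg A$. Since at least one of $A,\neg A$ falls outside each of $\bSigma^1_{n-1}$ and $\bPi^1_{n-1}$, the sectioning argument forces $A_0\in\Delta_\F\setminus(\bSigma^1_{n-1}\cup\bPi^1_{n-1})$. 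Now, for any $B\in\bSigma^1_{n-1}\cup\bPi^1_{n-1}$, $\SLO^\Lip$ (a consequence of $\AD$) gives $B\leq_\Lip A_0$ or $A_0\leq_\Lip\neg B$; the second alternative would force $A_0$ into one of $\bPi^1_{n-1}$, $\bSigma^1_{n-1}$ by continuity of Lipschitz preimages, contradicting the choice of $A_0$. Hence $B\leq_\Lip A_0\leq_\F\bN_{\langle 0\rangle}$, so $B\in\Delta_\F$. For (b), since $n-1$ is odd, under $\AD$ the pointclass $\bGamma=\bPi^1_{n-1}$ is closed under coprojections, countable unions, and countable intersections, so (a) together with \eqref{eq1} applied to $\bLambda=\Delta_\F$ yields $\bdelta^1_{n-1}<\bdelta_\F$.

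Given $D\in\bDelta^1_n$, Lemma~\ref{lemmanew} furnishes a $\bDelta^1_{n-1}$-partition $\seq{D_\xi}{\xi<\bdelta^1_{n-1}}$ of $\RR$ and some $S\subseteq\bdelta^1_{n-1}$ with $D=\bigcup_{\xi\in S}D_\xi$. By (a) each $D_\xi\in\Delta_\F$, and by (b) the length of the partition is below $\bdelta_\F$. Taking $f_\xi$ to be the constant function with value in $\bN_{\langle 0\rangle}$ when $\xi\in S$ and outside $\bN_{\langle 0\rangle}$ otherwise, superamenability delivers $f=\bigcup_{\xi<\bdelta^1_{n-1}}(\restr{f_\xi}{D_\xi})\in\F$ with $f^{-1}(\bN_{\langle 0\rangle})=D$, hence $D\in\Delta_\F$. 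This gives $\Delta_\F=\bDelta^1_n$ and consequently $\bdelta_\F=\bdelta^1_n$. For the second formulation it is enough to reduce it to the first by showing that if $n$ is the smallest even integer with $\bdelta_\F\leq\bdelta^1_n$ then $\Delta_\F\subseteq\bDelta^1_n$: running the same $\SLO^\Lip$/disjoint-sum argument one level higher, a hypothetical $A_0\in\Delta_\F\setminus(\bSigma^1_n\cup\bPi^1_n)$ would yield $\bSigma^1_n\cup\bPi^1_n\subseteq\Delta_\F$, and since $n$ is even $\bPi^1_n$ has the closure properties needed to invoke \eqref{eq1}, producing $\bdelta^1_n<\bdelta_\F$ and contradicting the hypothesis; minimality then transfers automatically. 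The main obstacle is the construction of $A_0$ in step (a): it is precisely where one needs to leverage closure of $\Delta_\F$ under complement together with superamenable gluing, in order to place a single element of $\Delta_\F$ outside both halves of the self-dual pointclass $\bDelta^1_{n-1}$ and so make the $\SLO^\Lip$ dichotomy effective against any $B\in\bSigma^1_{n-1}\cup\bPi^1_{n-1}$.
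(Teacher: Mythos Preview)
Your proof is correct and follows the same strategy as the paper: establish $\bdelta^1_{n-1}<\bdelta_\F$ via $\SLO^\L$ and \eqref{eq1}, then feed the $\bDelta^1_{n-1}$-partition from Lemma~\ref{lemmanew} into the gluing clause of superamenability with constant functions. The only notable difference is in your step (a). You build an auxiliary set $A_0\in\Delta_\F\setminus(\bSigma^1_{n-1}\cup\bPi^1_{n-1})$ by a three-piece clopen gluing of $A$ and $\neg A$; this works, but the paper avoids it. Since $\Delta_\F$ is selfdual and closed under $\L$-preimages, $\SLO^\L$ already forces $\Delta_\F$ and $\bDelta^1_{n-1}$ to be $\subseteq$-comparable, so minimality of $n$ gives $\bDelta^1_{n-1}\subsetneq\Delta_\F$; then any $A\in\Delta_\F\setminus\bDelta^1_{n-1}$ may be assumed (swapping with $\neg A$ if necessary) to lie outside $\bSigma^1_{n-1}$, and $\SLO^\L$ directly yields $\bPi^1_{n-1}\subseteq\Delta_\F$, whence $\bSigma^1_{n-1}\subseteq\Delta_\F$ by selfduality. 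Your disjoint-sum construction buys a single witness working for both sides simultaneously, at the cost of an extra gluing step; the paper's version is shorter but relies on the same ingredients. On the other hand, you spell out the reduction of the second formulation to the first (showing $\Delta_\F\subseteq\bDelta^1_n$ from $\bdelta_\F\leq\bdelta^1_n$ via \eqref{eq1} with $\bGamma=\bPi^1_n$), which the paper leaves implicit.
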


\begin{proof}
First we prove that $\bdelta^1_{n-1} < \bdelta_\F \leq
\bdelta^1_n$. Since $\Delta_\F$ is closed under $\L$-preimages, by $\SLO^\L$ (which is a consequence of $\AD$) either $\Delta_\F \subseteq \bDelta^1_{n-1}$ or else $\bDelta^1_{n-1} \subsetneq \Delta_\F$. The minimality of $n$ implies the second possibility, and since  ${\bDelta^1_{n-1} \subsetneq \Delta_\F} \imp {{\bSigma^1_{n-1} \cup \bPi^1_{n-1}} \subseteq \Delta_\F}$ by $\SLO^\L$ again, applying \eqref{eq1} with $\bGamma = \bPi^1_{n-1}$ we get $\bdelta^1_{n-1} < \bdelta_\F$. Finally, $\bdelta_\F \leq \bdelta^1_n$ by the choice of $n$.
 
Now let $D \in \bDelta^1_n$ and
$\seq{D_\xi}{\xi<\bdelta^1_{n-1}}$ be a $\bDelta^1_{n-1}$-partition of
$\RR$ such that $D = \bigcup_{\xi \in
  S} D_\xi$ for some $S \subseteq \bdelta^1_{n-1}$ (such a partition exists by Lemma \ref{lemmanew}). Moreover, let
$f_i$ be the constant function with value $\vec{i} = \langle i,i,i, \dots \rangle$,  and 
for every $\xi < \bdelta^1_{n-1}$ put
$f_\xi =
f_0$ if $\xi \in S$ and
$f_\xi =f_1$ otherwise. It is clear that $f_0, f_1 \in \L
\subseteq \F$ and that since $\bdelta^1_{n-1} < \bdelta_\F$
\[ f = \bigcup_{\xi<\bdelta^1_{n-1}}(\restr{f_\xi}{D_\xi}) \in \F\]
by superamenability. But  $f$ reduces $D$ to
$\bN_{\langle 0 \rangle}$, hence $D \in \Delta_\F$.
\end{proof}

Recall that by Theorem 3.1 of \cite{mottorosborelamenability}, the structure of the
$\F$-degrees is completely determined whenever we can establish what
happens at limit levels
(of uncountable cofinality) and after a selfdual degree. Moreover, we have
that  Lemma 4.4 of \cite{mottorosborelamenability} holds in our new context (hence, in particular,
$D \cap A \leq_\F A$ for every $D \in \Delta_\F$ and every $A \neq \RR$),
but the definition of the decomposition property given in that paper must be adapted
to the new setup.

\begin{defin}
  Let $\F$ be a superamenable set of reductions.
A set $A \subseteq \RR$ has the \emph{decomposition property} with respect
to $\F$ if there is some $\eta
< \bdelta_\F$ and a $\Delta_\F$-partition
$\seq{D_\xi}{\xi<\eta}$ of $\RR$
such that $D_\xi \cap A <_\F A$ for every $\xi<\eta$.

The set of reductions $\F$ has the \emph{decomposition property}
(\textbf{DP} for short) if every
$\F$-selfdual $A \subseteq \RR$ such that $A \notin \Delta_\F$ has the
decomposition
property with respect to $\F$.
\end{defin}

Note that the new definition of the 
decomposition property
is coherent (i.e.\ coincide) with the original one whenever $\F \subseteq \Bor$, as 
this implies $\bdelta_\F
= \omega_1$.

\section{Tractable pointclasses and large reducibilities}
\label{sectionexistentialpointclasses}

We call \emph{existential pointclass} any
  boldface pointclass $\bDelta^0_1 \subseteq \bGamma \neq \pow(\RR)$ which is closed under projections,
countable
  unions and countable intersections.
For instance, the projective pointclasses $\bSigma^1_n$ and the
collections $\mathbf{S}(\kappa)$ of all $\kappa$-Suslin sets
(where $\kappa$ is some infinite cardinal) are  existential pointclasses.

Moreover we will call 
\emph{tractable pointclasses} those existential pointclasses $\bGamma$'s 
which have the uniformization property and such that either $\bGamma$ or
$\breve{\bGamma}$ is scaled. Notice that not all the existential pointclasses are tractable, as e.g.\ $\bSigma^1_{2n+1}$ does not have the uniformization property.
Note also that if $\bGamma$ has a universal set then $\bGamma$ 
is tractable if
and only if $\bGamma$ is a scaled existential pointclass with the
uniformization property. (Assume towards a contradiction
that $\bGamma$ is an existential pointclass with the
uniformization property and that $\breve{\bGamma}$ is scaled: since $\breve{\bGamma}$ is also
closed under coprojections, we would have that $\breve{\bGamma}$ has
the uniformization property as well,
and this would in turn imply that both $\bGamma$ and $\breve{\bGamma}$ have the
reduction property. But this contradicts a standard fact in Descriptive Set Theory, see e.g.\ Proposition 22.15 in \cite{kechris}.) In particular, this equivalence is true under $\AD$ (as $\SLOL$ implies that any nonselfdual boldface pointclass has a universal set).

\begin{proposition}\label{propdeltagammafunctions}
  Let $\bGamma$ be an existential pointclass, and let $f\colon \RR \to \RR$
  be any function. Then the following are equivalent:
  \begin{enumerate}[i)]
  \item $f$ is a $\bGamma$-function (equivalently, a $\breve{\bGamma}$-function);
\item $f$ is a $\bDelta_\bGamma$-function;
\item $f$ is $\bDelta_\bGamma$-measurable (equivalently, $\bGamma$-measurable);
\item ${\rm graph}(f) \in \bDelta_\bGamma$, where ${\rm graph}(f) = \{ (x,y) \in \RR^2 \mid f(x) = y \}$;
\item ${\rm graph}(f) \in \bGamma$.
  \end{enumerate}
\end{proposition}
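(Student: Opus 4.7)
The plan is to close the cycle (i) $\imp$ (ii) $\imp$ (iii) $\imp$ (iv) $\imp$ (v) $\imp$ (i), handling the parenthetical equivalences in (i) and (iii) en route. The equivalence between ``$\bGamma$-function'' and ``$\breve{\bGamma}$-function'' in (i) is immediate by passing to complements, since $f^{-1}$ commutes with $\neg$ and $A \in \bGamma$ iff $\neg A \in \breve{\bGamma}$. Step (i) $\imp$ (ii) is then the observation that every $A \in \bDelta_\bGamma$ lies in both $\bGamma$ and $\breve{\bGamma}$, so $f^{-1}(A)$ lies in both as well. Step (ii) $\imp$ (iii) is equally soft: since $\bGamma \supseteq \bDelta^0_1$ is closed under countable unions and intersections, every open subset of $\RR = \Bai$ lies in $\bDelta_\bGamma$, so $f^{-1}(U) \in \bDelta_\bGamma$ for every open $U$.

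The substantive step is (iii) $\imp$ (iv), and here I also verify that $\bGamma$-measurability and $\bDelta_\bGamma$-measurability coincide. One direction is trivial; for the other, I exploit the zero-dimensionality of $\Bai$: each basic clopen $N_s$ and its complement are open, so $\bGamma$-measurability forces $f^{-1}(N_s) \in \bGamma \cap \breve{\bGamma} = \bDelta_\bGamma$, and arbitrary open sets are obtained by countable unions (closure of $\bGamma$ under countable unions and of $\breve{\bGamma}$ under countable unions). With this upgrade, I decompose the graph as
\[ {\rm graph}(f) = \bigcap_{n \in \omega} \bigcup_{k \in \omega} \bigl( f^{-1}(A_{n,k}) \times A_{n,k} \bigr), \qquad A_{n,k} = \{ z \in \RR \mid z(n) = k \}, \]
and dually
\[ \neg\, {\rm graph}(f) = \bigcup_{n \in \omega} \bigcup_{k \neq k'} \bigl( f^{-1}(A_{n,k}) \times A_{n,k'} \bigr). \]
Each $f^{-1}(A_{n,k})$ lies in $\bDelta_\bGamma$, each $A_{n,k}$ is clopen, and the rectangles $f^{-1}(A_{n,k}) \times A_{n,k'}$ lie in $\bDelta_\bGamma$ via the continuous projections $\pi_1, \pi_2 \colon \RR^2 \to \RR$ (using closure of boldface pointclasses under continuous preimages). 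Closure of $\bGamma$ under countable unions and intersections then places both ${\rm graph}(f)$ and its complement in $\bGamma$, so ${\rm graph}(f) \in \bDelta_\bGamma$.

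Step (iv) $\imp$ (v) is trivial since $\bDelta_\bGamma \subseteq \bGamma$. For (v) $\imp$ (i), given any $B \in \bGamma$ I write
\[ f^{-1}(B) = \pi_1 \bigl( {\rm graph}(f) \cap (\RR \times B) \bigr), \]
and $\RR \times B = \pi_2^{-1}(B) \in \bGamma$ by closure under continuous preimages, so the right-hand side lies in $\bGamma$ by closure under finite intersections and projections. I expect the only genuine obstacle to be the upgrade from $\bGamma$-measurability to $\bDelta_\bGamma$-measurability inside step (iii) $\imp$ (iv), since it is the unique point where the specific topology of $\Bai$ enters the argument; every other implication reduces to a formal manipulation of the abstract closure properties of an existential pointclass.
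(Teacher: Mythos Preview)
Your proof is correct and follows essentially the same cycle (i) $\Rightarrow$ (ii) $\Rightarrow$ (iii) $\Rightarrow$ (iv) $\Rightarrow$ (v) $\Rightarrow$ (i) as the paper. Your treatment of (iii) $\Rightarrow$ (iv) via the rectangle decomposition $\bigcap_n \bigcup_k f^{-1}(A_{n,k}) \times A_{n,k}$ is just an unpacking of the paper's formula $(x,y) \in {\rm graph}(f) \iff \forall n\, (x \in f^{-1}(\bN_{\restr{y}{n}}))$, and your (v) $\Rightarrow$ (i) via $\pi_1({\rm graph}(f) \cap (\RR \times B))$ is the same projection argument the paper writes in set-builder form; the only addition is that you spell out the parenthetical equivalences in (i) and (iii), which the paper leaves implicit.
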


\begin{proof}
  It is not hard to see that \textit{i)}  implies \textit{ii)}, and since $\bDelta_\bGamma$ is
  closed
under countable unions and intersections, we have that \textit{ii)}
implies
\textit{iii)}.
Moreover, \textit{iii)} implies \textit{iv)} since $\bDelta_\bGamma$ is closed under countable intersections and
\[ (x,y) \in {\rm graph}(f) \iff \forall n (x \in
f^{-1}(\bN_{\restr{y}{n}})). \]
Clearly
\textit{iv)} implies \textit{v)} and, finally, \textit{v)} implies
\textit{i)} since if $A \in \bGamma$ then
\[ f^{-1}(A) = \{ x \in \RR \mid \exists y((x,y) \in {\rm graph}(f)
\wedge y \in A) \} \in \bGamma \]
by closure of $\bGamma$ under projections and finite intersections.
\end{proof}

Given an existential pointclass $\bGamma$, we can define the set of functions
\[ \F_\bGamma = \{ f \colon \RR \to \RR \mid f \text{ is a }
\bDelta_\bGamma\text{-function}\}\]
(equivalently, $\F_\bGamma$ is the collection of all functions  which
satisfy any of the conditions 
in Proposition \ref{propdeltagammafunctions}), and
it is immediate to check 
that
$\Delta_{\F_\bGamma} = \bDelta_\bGamma$ and $\bdelta_{\F_\bGamma} = \bdelta_\bGamma$ is a cardinal of uncountable cofinality  --- see Section \ref{sectionsuperamenability}. A set of reductions $\F$ will be called \emph{tractable} if 
$\F = \F_\bGamma$ for some tractable pointclass $\bGamma$.

Now assume $\AD$ and let $\bGamma$ be an existential pointclass. Using the fact that $\bGamma \neq \pow(\RR)$,
by Remark 3.2 of \cite{mottorosborelamenability} we have
that
there  is a
surjection of $\RR$ onto $\F_\bGamma$, and thus $\F_\bGamma$ is automatically a  set of
reductions since it is trivially closed under composition. Moreover, $\Lip \subseteq \Bor \subseteq 
\F_\bGamma$ and, using the fact that
$\bGamma$ is closed under unions of length less than $\bdelta_\bGamma$, we have
that $\F_\bGamma$ is a superamenable set of reductions: in fact, if $A \in
\bGamma$ and $f = \bigcup_{\xi<\lambda}(\restr{f_\xi}{D_\xi})$ (with $\lambda <
\bdelta_\bGamma$, $f_\xi \in \F_\bGamma$, and $\seq{D_\xi}{\xi<\lambda}$ a
$\bDelta_\bGamma$-partition of $\RR$) we have
\[ f^{-1}(A) = \bigcup\nolimits_{\xi<\lambda}(f_\xi^{-1}(A) \cap D_\xi) \in \bGamma,\]
hence $f \in \F_\bGamma$.
In particular, the set of all
$\bDelta^1_n$-functions (for each $n$) is superamenable.

We will now try to determine, under $\AD+\DC$, the structure of degrees induced
by $\F_\bGamma$. For the sake of simplicity, we will sistematically use the
symbol $\bGamma$ instead of $\F_\bGamma$ in all the notations
related to reductions: for example, we will write $\leq_\bGamma$,
$[A]_\bGamma$, $\SLO^\bGamma$ instead of
$\leq_{\F_\bGamma}$, $[A]_{\F_\bGamma}$, $\SLO^{\F_\bGamma}$,
 and so on.

The first step is to prove that $\F_\bGamma$ has the decomposition
property,
but
to have this result we must assume that either $\bGamma$ or $\breve{\bGamma}$
has the scale property. In both cases, we have that every $D \in
\bDelta_\bGamma$ admits a $\bGamma$-scale $\vec{\psi} =
\seq{\psi^D_n}{n
\in \omega}$ on $D$ with the property \eqref{verygood} and such that $\psi^D_n\colon D \onto \eta_D$ (for some $\eta_D <
\bdelta_\bGamma$). Similarly, if $f \in \F_\bGamma$ then there is a
$\bGamma$-scale $\vec{\psi} = \seq{\psi^f_n}{n \in \omega}$ on ${\rm
  graph}(f)$ and an ordinal
$\eta_f < \bdelta_\bGamma$ such that $\vec{\psi}$ has the property \eqref{verygood} and $\psi^f_n \colon {\rm graph}(f) \onto \eta_f$
for every $n \in \omega$.

\begin{theorem}[\AD]\label{theorDP2}
  Let $\bGamma$ be an existential pointclass such that either $\bGamma$ or
$\breve{\bGamma}$ is scaled. Then every $\F_\bGamma$-selfdual $A \subseteq \RR$
such that $A \notin \Delta_{\F_\bGamma} = \bDelta_\bGamma$ has the decomposition property with
respect to $\F$.
\end{theorem}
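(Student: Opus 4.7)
The plan is to build the required partition from a very good scale on the graph of a selfduality witness. Since $A$ is $\F_\bGamma$-selfdual, fix $f \in \F_\bGamma$ with $f^{-1}(\neg A) = A$; by Proposition \ref{propdeltagammafunctions}, ${\rm graph}(f) \in \bDelta_\bGamma$, and the hypothesis that $\bGamma$ or $\breve{\bGamma}$ is scaled yields a very good $\bGamma$-scale $\vec{\psi} = \seq{\psi_n^f}{n \in \omega}$ on ${\rm graph}(f)$ with all norms landing in a common ordinal $\eta_f < \bdelta_\bGamma$. Define
\[
D_\xi = \{ x \in \RR \mid \psi_0^f(x, f(x)) = \xi \}, \qquad \xi < \eta_f.
\]
By the definability clause of the scale together with the closure properties of $\bGamma$, $\seq{D_\xi}{\xi < \eta_f}$ is a $\bDelta_\bGamma$-partition of $\RR$ of length $\eta_f < \bdelta_\bGamma = \bdelta_{\F_\bGamma}$, as required.

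The easy inequality $D_\xi \cap A \leq_\bGamma A$ is immediate: fixing any $z_0 \notin A$, the function which is the identity on $D_\xi$ and constantly $z_0$ on $\neg D_\xi$ belongs to $\F_\bGamma$ by superamenability and reduces $D_\xi \cap A$ to $A$.

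For strict inequality, suppose toward a contradiction that $A = g^{-1}(D_\xi \cap A)$ for some $g \in \F_\bGamma$. Setting $E = g^{-1}(D_\xi) \in \bDelta_\bGamma$, one has $A \subseteq E$ and $x \in A \iff g(x) \in A$ for every $x \in E$, so $A$ is $g$-recursively determined on $E$. The key technical claim is that no $g$-orbit starting in $\neg A$ can remain inside $E$ forever: if $(g^n(x))_{n \geq 0} \subseteq E$ with $x \in \neg A$, then $(g^{n+1}(x), f(g^{n+1}(x)))_{n \geq 0} \subseteq {\rm graph}(f)$ has $\psi_0^f \equiv \xi$, and a careful combination of $\DC$, the very good property \eqref{verygood}, and the First Coding Lemma (used to uniformly select witnesses of bounded scale value) should extract a convergent subsequence whose limit point violates the selfduality of $A$. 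Granting this, $\neg A = \bigcup_{n \in \omega} (g^n)^{-1}(\neg E)$ is a countable union of $\bGamma$-sets, hence $\neg A \in \bGamma$, and then $A = f^{-1}(\neg A) \in \bGamma$ as $f$ is a $\bGamma$-function, forcing $A \in \bDelta_\bGamma$ and contradicting the hypothesis.

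The main obstacle is precisely the orbit-termination claim: the naive convergence argument via \eqref{verygood} does not immediately yield a contradiction, since $\bDelta_\bGamma$-functions need not be continuous, so one likely has to refine the partition using several scale values at once (still admissible since $|\eta_f|^{<\omega} < \bdelta_\bGamma$ by the uncountable cofinality of $\bdelta_\bGamma$) and invoke the Coding Lemma to extract a uniform choice of limits. This is the step that fully exploits the scale hypothesis on $\bGamma$, in place of the change-of-topology trick used in the Borel-amenable setting.
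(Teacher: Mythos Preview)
Your proposal has a genuine gap, and you yourself identify it: the ``orbit-termination claim'' is the entire substance of the argument, and it is neither proved nor provable along the lines you sketch. Controlling only $\psi_0^f$ along the $g$-orbit gives you nothing: the very good property \eqref{verygood} requires \emph{every} norm $\psi_k^f$ to stabilize before you can extract a convergent subsequence. Your suggested fix---refining the partition by finitely many scale values $\psi_0^f,\dotsc,\psi_{m-1}^f$---still only pins down finitely many norms on the orbit, so \eqref{verygood} still does not apply; refining by \emph{all} norms at once would require a partition indexed by ${}^\omega(\eta_f)$, which is no longer of size ${<}\,\bdelta_\bGamma$ (uncountable cofinality of $\bdelta_\bGamma$ bounds $\eta_f^{<\omega}$, not $\eta_f^\omega$). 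And even if you somehow obtained a limit point $x^*$, the functions $f,g\in\F_\bGamma$ are not continuous, so nothing forces $x^*\in\neg A$, $g(x^*)\in D_\xi$, or $f(x^*)$ to behave compatibly with the orbit---the ``violation of selfduality'' you allude to simply does not materialize. The Coding Lemma is also a red herring here: it produces choice sets, not convergent sequences.

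The paper's proof proceeds quite differently. It is a global contradiction argument in the Martin--Monk style: assuming $A$ fails the decomposition property in \emph{every} $\bDelta_\bGamma$-partition of size ${<}\,\bdelta_\bGamma$, one builds by recursion a nested sequence $\RR=D_0\supseteq D_1\supseteq\cdots$ of $\bDelta_\bGamma$-sets in which $A$ remains non-decomposable, together with flipping functions $f_n\colon\RR\to D_n$, refining at stage $n$ so that all the scale values $\psi^{g_m}_k$ (for $m\le n$, $k<n$) of the relevant compositions are constant on $D_{n+1}$. This diagonal refinement is exactly what makes \eqref{verygood} applicable to the iterated compositions $g_m\circ\cdots\circ g_n(y_{n+1})$, yielding for each $z\in\Can$ a coherent sequence of limit points $x_m(z)$ with $g_m(x_{m+1}(z))=x_m(z)$; the set $\{z\in\Can\mid x_0(z)\in A\}$ is then a flip-set, contradicting $\AD$. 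The point is that the scale is used not to analyze a single orbit under one fixed $g$, but to control limits of \emph{arbitrarily long compositions} of a whole sequence of flipping maps---and this requires the iterative construction, not a one-shot partition.
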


\begin{proof}
  Towards a contradiction with $\AD$, assume that for every $\eta<\bdelta_\bGamma$
  and every $\bDelta_\bGamma$-partition $\seq{D_\xi}{\xi<\eta}$ of
  $\RR$ there is some $\xi_0 < \eta$ such that $A \cap D_{\xi_0}
  \equiv_\bGamma A$: we will construct a \emph{flip-set}, that is a subset $F$ of the Cantor space $\Can$ with the property that $z \in F \iff w \notin F$ whenever $z,w \in \Can$ and $\exists ! n (z(n) \neq w(n))$. Since every flip-set can not have the Baire property, this will give the desired contradiction.

The ideas involved in the present proof are not far from those used for Theorem 5.3
 of \cite{mottorosborelamenability}, but in this case we will use $\bGamma$-scales instead of changes
of topology. Let us say that $A$ is \emph{not decomposable in $D \in \bDelta_\bGamma$} if there is no $\eta < \bdelta_\bGamma$ and no $\bDelta_\bGamma$-partition $\seq{D_\xi}{\xi<\eta}$ of $D$ such that $A \cap D_\xi <_\bGamma A$ for each $\xi < \eta$.
Arguing as in the original proof, one can prove that if $A$ is not decomposable in some
$D \in \bDelta_\bGamma$ then
there is some $f \in \F_\bGamma$ such that ${\rm range}(f) \subseteq
D$ and
\[ \forall x \in D(x \in A \cap D \iff f(x) \in \neg A \cap D).\]

We will construct a countable sequence of nonempty $\bDelta_\bGamma$-sets
\[ \dotsc \subseteq D_2 \subseteq D_1 \subseteq D_0 = \RR, \]
a sequence of $\bDelta_\bGamma$-functions $f_n\colon \RR \to D_n$ and, for
every $z \in \Can$, a sequence $\{\alpha^m_k(z) \mid k,m \in \omega\}$
of ordinals strictly
smaller than
$\bdelta_\bGamma$ such that for every $n \in \omega$
\begin{equation}
  \label{eqcondDP2}
  \forall m \leq n \forall x,y \in D_{n+1} \forall k < n
  (\psi^{g_m}_k(g_{m+1} \circ \dotsc \circ g_n(x),g_m \circ \dotsc
  \circ g_n(x))= \alpha^m_k(z)),
\end{equation}
where $g_i = f_i$ if $z(i)=1$ and $g_i = id$ otherwise\footnote{The ordinals $\alpha^m_k(z)$ will really depend only
  on $\restr{z}{\max\{m,k\}+1}$, and we will also have that
  $\alpha^m_k(z) < \eta_{g_m}$ for every $m,k \in \omega$.} (when $m=n$,
the expression $\psi^{g_m}_k(g_{m+1}\circ \dotsc \circ g_n(x),g_m
\circ \dotsc \circ g_n(x)) = \alpha^m_k(z)$ in the equation above must be simply
understood as
$\psi^{g_n}_k (x,g_n(x)) = \alpha^n_k(z)$).

Having constructed all these sequences, we can finish the proof in the
following way: first fix $y_{n+1} \in D_{n+1}$ (for every $n \in
\omega$). For every $z \in \Can$, every $n \in \omega$ and every $m
\leq n$ define
$x^n_m = g_m \circ \dotsc \circ g_n(y_{n+1})$, 
and note that $x^n_m \in D_m$. If we fix $m$ and let vary the
parameter $n$ we get
\[ \lim_n \psi^{g_m}_k(x^n_{m+1},x^n_m) = \alpha^m_k(z) < \eta_{g_m} <
\bdelta_\bGamma \]
for every $k \in \omega$. This implies, by the property \eqref{verygood} of the scales involved, that the sequence $\seq{(x^n_{m+1},x^n_m)}{n \in \omega}$
converges
to some $(x_{m+1},x_m) \in {\rm graph}(g_m)$, that is to some pair of
points
such that $x_m \in D_m$ and $g_m(x_{m+1}) = x_m$. Observe also that
the sequence $\seq{x_m}{m \in \omega}$ is well defined since
$\seq{(x_n,y_n)}{n \in \omega}$ converges to $(x,y)$ if and only if
$\seq{x_n}{n \in \omega}$ converges to $x$ and $\seq{y_n}{n \in
  \omega}$ converges to $y$, and the limit of a converging sequence is unique.

Clearly, the points $x_m$ really depend on the choice of $z \in
\Can$, hence we should have written $x_m = x_m(z)$. If $z,w \in
\Can$ and $n_0 \in \omega$ are such that $\forall n > n_0 (z(n) =
w(n))$ then
$\forall n > n_0(x_n(z) = x_n(w))$,
and if $z(n_0) \neq w(n_0)$ then $x_{n_0}(z),x_{n_0}(w) \in D_{n_0}$ but
$x_{n_0}(z) \in A \cap D_{n_0} \iff x_{n_0}(w) \notin A \cap
D_{n_0}$.
Therefore we get that $\{ z \in \Can \mid x_0(z) \in A\}$ is a
flip-set, a contradiction!\\

Now we will construct by induction the $D_n$'s, the $f_n$'s and
the $\alpha^m_k$'s, granting inductively that $A$ is not decomposable in $D_n$. First put $D_0 = \RR$ and let $f_0$ be any
reduction of $A$ to $\neg A$.
Suppose to have constructed $D_j$, $f_j$ and
$\alpha^m_k(z)$ for every $j,m\leq n$, $k<n$ and $z \in
\Can$. Moreover fix $s \in {}^{n+1}2$ and define $g^s_i = g_i$ for
every $i \leq n$ by letting $g_i=f_i$ if $s(i) = 1$ and $g_i = id$
otherwise. For every $\tau \in {}^{n+1}(\eta_{g_0})$ consider the set
\[ D^0_\tau = \{ x \in D_n \mid \forall i < n+1 (\psi^{g_0}_i(g_1 \circ
\dotsc \circ g_n(x),g_0 \circ \dotsc \circ g_n(x)) =\tau(i))\}\]
(where if $n=0$ by $g_1 \circ \dotsc \circ g_n(x)$ we simply mean the
point $x$). Observe also that if $D^0_\tau \neq \emptyset$ then $\forall
j < n (\tau(j) = \alpha^0_i(z))$ for every $z \supseteq s$. Since $A$ is not decomposable in $D_n$
by inductive hypothesis, the fact
that  $\seq{D^0_\tau}{\tau \in
  {}^{n+1}(\eta_{g_0})}$ is a $\bDelta_\bGamma$-partition in less than
$\eta_{g_0} < \bdelta_\bGamma$ pieces of $D_n$ implies that there must be some
$\tau_0 \in {}^{n+1}(\eta_{g_0})$ such that $A$ is still not decomposable in $D^0_{\tau_0}$. 
Hence we can put $D^0 = D^0_{\tau_0} \subseteq D_n$, and
for every $z \in \Can$ such that $z \supseteq s$ and every $k < n+1$
we can also define
$\alpha^0_k(z) = \tau_0(k)$
(observe that since $A$ is not decomposable in $D^0_{\tau_0}$ then $D^0_{\tau_0}
\neq \emptyset$, and hence the definition of the $\alpha^0_k(z)$ is
well given).

Inductively, for every $m+1 < n+1$ we can repeat the above construction 
defining for every $\tau \in
{}^{n+1}(\eta_{g_{m+1}})$ the set
\[ D^m_\tau = \{ x \in D^m \mid \forall i < n+1 (\psi^{g_{m+1}}_i
(g_{m+2} \circ \dotsc \circ g_n(x),g_{m+1} \circ \dotsc \circ g_n(x))
= \tau(i))\}\]
(where if $m+1 = n$, as usual, $g_{m+2} \circ \dotsc \circ g_n(x)$
simply denotes the point $x$). The sequence $\seq{D^m_\tau}{\tau \in
  {}^{n+1}(\eta_{g_{m+1}})}$ forms a $\bDelta_\bGamma$-partition in
less than $\eta_{g_{m+1}}$ pieces of $D^m$, and since by inductive hypothesis $A$ is not decomposable in $D^m$  there must be some $\tau_{m+1}$
such that $A$ is still not decomposable in $D^m_{\tau_{m+1}}$. Moreover, for such
$\tau_{m+1}$ we have that $\alpha^{m+1}_k(z) = \tau_{m+1}(k)$ for every $k < n$ and every $s \subseteq z \in \Can$ (since $\emptyset \neq D^m_{\tau_{m+1}}
\subseteq D^m$).
Hence we can coherently define $D^{m+1} = D^m_{\tau_{m+1}}$ and
$\alpha^{m+1}_k(z) = \tau_{m+1}(k)$ for every $k < n+1$ and every $z
\in \Can$ such that $z \supseteq s$.

Now put $D(s) = D^n$ and repeat the whole construction for every
$s \in {}^{n+1}2$: let $\seq{s_i}{1 \leq i \leq 2^{n+1}}$ be an
enumeration without repetitions of ${}^{n+1}2$, and define $D(s_1)$
as above, $D(s_2)$ with the same construction but using $D(s_1)$
instead of $D_n$ in the first stage, and so on. Finally, put $D_{n+1} = D(s_{2^{n+1}})$, and let $f_{n+1} \in \F_\bGamma$ be obtained as
at the beginning of this proof. Clearly we have that $A$ is not decomposable in $D_{n+1}$, and it is straightforward to
inductively verify
that condition \eqref{eqcondDP2} holds for the sequences constructed.
\end{proof}

Now we want to prove the natural restatement of Lemma 4.5 of \cite{mottorosborelamenability} in this
new context\footnote{Note that Lemma 4.5 of \cite{mottorosborelamenability} is still true when
we consider superamenable sets of reductions $\F$, but if $\Bor \subsetneq \F$
this is not enough to determine the corresponding degree-structure.},
i.e.\ considering ${<\bdelta_\bGamma}$-partitions instead of countable
partitions.
The fundamental key to prove this result (and thus to determine the whole
degree-structure
induced by $\F_\bGamma$) is the following lemma, which unfortunately
(till the moment) can be proved only if $\bGamma$ is an existential
pointclass with the uniformization property.

\begin{lemma}[\AD]\label{lemmacentralrole}
  Let $\bGamma$ be an existential pointclass with the uniformization property.
For every $\eta<\bdelta_\bGamma$, every $\bDelta_\bGamma$-partition
$\seq{D_\xi}{\xi<\eta}$ of $\RR$, and every family of non trivial
(i.e.\ different from $\RR$) sets
$\{A_\xi
\mid
\xi<\eta\}$, we have that if $A_\xi \leq_\bGamma B$ for every $\xi<\eta$ then
$\bigcup_{\xi<\eta}(A_\xi \cap D_\xi) \leq_\bGamma B$ (for every $B \subseteq
\RR$).
\end{lemma}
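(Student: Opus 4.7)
The natural strategy is to build the reduction $f\colon \RR \to \RR$ by pasting: for each $\xi < \eta$ choose some $g_\xi \in \F_\bGamma$ witnessing $A_\xi \leq_\bGamma B$, and set $f(x) = g_\xi(x)$ whenever $x \in D_\xi$. Then $f^{-1}(B) = \bigcup_{\xi<\eta}(g_\xi^{-1}(B) \cap D_\xi) = \bigcup_{\xi<\eta}(A_\xi \cap D_\xi)$ by construction, while the superamenability of $\F_\bGamma$ established earlier in the section (from closure of $\bGamma$ under unions of length $<\bdelta_\bGamma$) guarantees that any such pasting lies in $\F_\bGamma$. Thus the whole content of the lemma is that the sequence $\seq{g_\xi}{\xi<\eta}$ can be selected in a sufficiently definable way, and this is where $\AD$ enters via the First Coding Lemma together with the uniformization property of $\bGamma$.

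Concretely, I would fix a $\bDelta_\bGamma$-prewellordering of some $S \subseteq \RR$ of length $\eta$ with rank function $\rho$ (available because $\eta<\bdelta_\bGamma$), together with a $\bGamma$-universal set $U \subseteq \RR^3$ whose sections $U_r = \{(x,y) : (r,x,y) \in U\}$ enumerate all $\bDelta_\bGamma$-subsets of $\RR^2$. The family $H(\xi) = \{r \in \RR : U_r \text{ is the graph of some } g \in \F_\bGamma \text{ with } g^{-1}(B) = A_\xi\}$ is nonempty at every $\xi$ by hypothesis and Proposition \ref{propdeltagammafunctions}, so the First Coding Lemma applied to $\xi \mapsto H(\xi)$ supplies a choice set $C \in \bGamma$ meeting every rank. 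I then consider
\[ R(x,y) \iff \exists z\, \exists r\,\bigl[ (z,r) \in C \wedge x \in D_{\rho(z)} \wedge (r,x,y) \in U \bigr], \]
which is total in $x$ and, granted the complexity claim discussed below, lies in $\bGamma$. Uniformizing $R$ yields a $\bGamma$-graph $R^* \subseteq R$ of some total $f \colon \RR \to \RR$; by Proposition \ref{propdeltagammafunctions} we have $f \in \F_\bGamma$, and for $x \in D_\xi$ the $r$ witnessing $R^*(x,f(x))$ lies in $H(\xi)$, so $f(x) \in B \iff x \in A_\xi$, giving $f^{-1}(B) = \bigcup_{\xi<\eta}(A_\xi \cap D_\xi)$.

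The main technical obstacle I expect is showing that the relation $\{(x,z) \in \RR^2 : z \in S \wedge x \in D_{\rho(z)}\}$ is in $\bGamma$: the hypothesis only furnishes $D_\xi \in \bDelta_\bGamma$ pointwise, which is strictly weaker than a uniform $\bGamma$-parametrization of the partition. I would address this by a preliminary application of the First Coding Lemma to the family $\xi \mapsto \{w \in \RR : U_w = D_\xi\}$; the resulting choice set picks uniformly in $\rho(z)$ a $\bGamma$-code for $D_{\rho(z)}$, from which the desired $\bGamma$-complexity of $\{(x,z) : x \in D_{\rho(z)}\}$ follows by closure of $\bGamma$ under projections and finite intersections. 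The nontriviality hypothesis $A_\xi \neq \RR$ is a standard side-condition ensuring that every $H(\xi)$ is genuinely nonempty.
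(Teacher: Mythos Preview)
Your argument is essentially the paper's own proof: fix a universal set and a prewellordering of length $\eta$, apply the First Coding Lemma to select codes for the reducing functions, form the multivalued relation $R$ (the paper calls it $\tilde{f}$), and uniformize. Two small remarks: the ``technical obstacle'' you isolate is disposed of in the paper simply via the closure of $\bGamma$ under unions of length ${<}\,\bdelta_\bGamma$ already recorded in Section~\ref{sectionsuperamenability} (write $\{(x,z): x\in D_{\rho(z)}\}=\bigcup_{\xi<\eta} D_\xi\times\rho^{-1}\{\xi\}$), so your second Coding-Lemma pass is correct but unnecessary; and the nontriviality hypothesis $A_\xi\neq\RR$ is not what makes $H(\xi)\neq\emptyset$ (that follows from $A_\xi\leq_\bGamma B$ alone) --- in the paper it is used for the preliminary reduction $A_\xi\cap D_\xi\leq_\bGamma A_\xi$, a step your direct formulation happens to bypass.
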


\begin{proof}
  Since $A_\xi \cap D_\xi \leq_\bGamma A_\xi$, we can clearly assume
  that $A_\xi \subseteq D_\xi$ for every $\xi<\eta$ and prove that if
  $A_\xi
\leq_\bGamma B$ for every $\xi<\eta$ then $\bigcup_{\xi<\eta}A_\xi
\leq_\bGamma B$.

Let $G \subseteq \RR^3$ be a universal set for $\bGamma$, i.e.\ a set
in $\bGamma$ such that the sets $A \subseteq \RR^2$ which are in
$\bGamma$ are exactly those of the form $G_x= \{ (y,z) \in \RR^2 \mid (x,y,z) \in G\}$ for some $x \in \RR$.
For every $\xi < \eta$, let
\[ F_\xi = \{x \in \RR \mid G_x \text{ is the graph of a
  $\bDelta_\bGamma$-function which reduces }A_\xi \text{ to } B \},\]
and observe  that each $F_\xi$ is nonempty by our hypotheses. Let now
${\leq} \in \bDelta_\bGamma$ be a prewellordering of length $\eta$
(which exists since $\eta<\bdelta_\bGamma$), consider its strict part
$<$
(which is also in $\bDelta_\bGamma \subseteq \bGamma$) and let $\rho$
be its rank function (which is surjective on $\eta$). Now define
$f\colon  \eta \to \pow(\RR)\colon  \xi \to F_\xi$
and apply  Lemma \ref{codinglemma} to get
a choice set $C \in \bGamma$ for $f$, so that for every
$\xi<\eta$
there is some $(w,z) \in C$ such that $z \in F_{\rho(w)}$. Consider the relation (which is not necessarily the graph of a
function)
\[ \tilde{f} = \{ (x,y) \in \RR^2 \mid \exists w \exists z ((w,z) \in
C \wedge x \in D_{\rho(w)} \wedge (x,y) \in G_z)\}.\]
It is straightforward to check that $\tilde{f}$ is in $\bGamma$ and
hence admits a uniformization $f^*$ which is again in $\bGamma$. Thus
$f^*$ is the graph of a $\bGamma$-function $f$ (see
Proposition \ref{propdeltagammafunctions}), and we claim that $f$
reduces $\bigcup_{\xi<\eta}A_\xi$ to $B$. Fix some $x \in \RR$ and let
$\xi<\eta$ be (the unique ordinal) such that $x \in D_\xi$, so that $x
\in \bigcup_{\xi<\eta}A_\xi \iff x \in A_\xi$. Now we have that
$(x,f(x)) \in f^* \subseteq \tilde{f}$, and thus $(x,f(x))$ is in the
graph
of some $\bDelta_\bGamma$-function that was a reduction of $A_\xi$ to
$B$. Hence
\[ x \in \bigcup\nolimits_{\xi<\eta} A_\xi\iff x \in A_\xi \iff f(x) \in B\]
and we are done.
\end{proof}

Now 
observe that for every
$\eta < \bdelta_\bGamma$ there is a $\bDelta_\bGamma$-partition of
$\RR$ into $\eta$ many pieces. In fact, let $\leq$ be a
prewellordering in $\bDelta_\bGamma$ of length $\eta+1 <
\bdelta_\bGamma$ (such a preordering must exist by definition of
$\bdelta_\bGamma$). Let $\rho \colon  S \onto \eta+1$ be its rank function. If
$\eta = \mu+1$,  put
$D = \{x \in S \mid \rho(x) < \mu\}$
and check that $\seq{D_\xi}{\xi<\eta}$ is the partition required if we
define $D_\mu = \neg D$ and $D_\xi = \{ x \in D \mid \rho(x) = \xi\}$ for
 $\xi < \mu$. If instead $\eta$ is limit, put
$ D = \{ x \in S \mid \rho(x) < \eta\} $
and check that $\seq{D_\xi}{\xi<\eta}$ is again as required if we
define $D_0 = \neg D$ and $D_{1+\xi} = \{ x \in D \mid \rho(x) = \xi\}$
for every $\xi<\eta$.

\begin{theorem}[\AD]\label{theorstructure2}
  Let $\bGamma$ be a tractable pointclass.  Then we have that:
  \begin{enumerate}[i)]
  \item if $[A]_\bGamma$ is limit of cofinality strictly less than
$\bdelta_\bGamma$
then $A \leq_\bGamma \neg A$;
\item for every $\eta<\bdelta_\bGamma$, every $\bDelta_\bGamma$-partition
$\seq{D_\xi}{\xi<\eta}$ of $\RR$ and every $A,B \subseteq \RR$, if $A
\cap
D_\xi
\leq_\bGamma B$ for every $\xi<\eta$ then $A \leq_\bGamma B$;
\item if $[A]_\bGamma$ is limit of cofinality greater then
  $\bdelta_\bGamma$
then $A
\nleq_\bGamma \neg A$;
\item after an $\F_\bGamma$-seldual degree there is a nonselfdual pair.
  \end{enumerate}
\end{theorem}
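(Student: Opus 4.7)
I plan to handle (ii) first, since it is the engine for the rest; then (iii) and (iv), which both reduce to Theorem~\ref{theorDP2} combined with (ii); and finally (i), which is the genuinely new ingredient at limit levels. A key remark used throughout is that any $f\in\F_\bGamma$ realizing $A\leq_\bGamma\neg A$ (so that $x\in A\iff f(x)\notin A$) automatically witnesses $\neg A\leq_\bGamma A$ via the same $f$; hence ``$A\leq_\bGamma\neg A$'' is equivalent to $A$ being $\F_\bGamma$-selfdual, which converts the conclusions of (i) and (iii) into statements about selfduality.

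For (ii), I simply invoke Lemma~\ref{lemmacentralrole} with the constant family $A_\xi:=A$, disposing of the trivial case $A=\RR$ separately: since $\seq{D_\xi}{\xi<\eta}$ partitions $\RR$, the equality $A=\bigcup_\xi(A\cap D_\xi)$ together with the lemma yields $A\leq_\bGamma B$.

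For (iii), suppose $[A]_\bGamma$ has cofinality $>\bdelta_\bGamma$ but $A$ is selfdual; since the analogous statement inside $\bDelta_\bGamma$ is already covered by the Borel-amenability results of \cite{mottorosborelamenability} we may assume $A\notin\bDelta_\bGamma$, so Theorem~\ref{theorDP2} produces a $\bDelta_\bGamma$-partition $\seq{D_\xi}{\xi<\eta}$ of $\RR$ with $\eta<\bdelta_\bGamma$ and $A\cap D_\xi<_\bGamma A$ for every $\xi$. Used contrapositively, (ii) forces $\{[A\cap D_\xi]_\bGamma:\xi<\eta\}$ to be cofinal below $[A]_\bGamma$: otherwise some $B<_\bGamma A$ would bound every $A\cap D_\xi$ and (ii) would give $A\leq_\bGamma B$. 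Hence $\mathrm{cof}([A]_\bGamma)\leq\eta<\bdelta_\bGamma$, contradicting the hypothesis. For (iv), let $[A^+]_\bGamma$ be the immediate successor of a selfdual degree $[A]_\bGamma$; if $A^+$ were also selfdual then Theorem~\ref{theorDP2} would yield a $\bDelta_\bGamma$-partition with each $A^+\cap D_\xi<_\bGamma A^+$, hence $A^+\cap D_\xi\leq_\bGamma A$, and (ii) would force $A^+\leq_\bGamma A$, contradicting $A<_\bGamma A^+$.

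For (i), set $\eta:=\mathrm{cof}([A]_\bGamma)<\bdelta_\bGamma$ and pick $\seq{B_\xi}{\xi<\eta}$ cofinal below $A$ with $B_\xi<_\bGamma A$; by $\SLO^\bGamma$, for each $\xi$ there is $h_\xi\in\F_\bGamma$ reducing $B_\xi$ to $\neg A$. I fix a $\bDelta_\bGamma$-partition $\seq{D_\xi}{\xi<\eta}$ of $\RR$ (from the construction described just before the theorem), together with, for each $\xi$, an $\F_\bGamma$-embedding $j_\xi\colon\RR\to D_\xi$ admitting an $\F_\bGamma$-left inverse on $D_\xi$; and I set $C:=\bigcup_\xi j_\xi(B_\xi)$. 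Then $C\cap D_\xi\equiv_\bGamma B_\xi$ via $j_\xi$, so $B_\xi\leq_\bGamma C$ for each $\xi$, whence $A\leq_\bGamma C$ by cofinality. Conversely, the function $f$ defined by $f(x):=h_\xi(j_\xi^{-1}(x))$ for $x\in D_\xi$ lies in $\F_\bGamma$ by superamenability and witnesses $C\leq_\bGamma\neg A$. Combining, $A\leq_\bGamma\neg A$. The main obstacle is the production of the embeddings $j_\xi$ and their $\F_\bGamma$-left inverses inside the cells of a general $\bDelta_\bGamma$-partition: in the Borel setting clopen partitions of $\Bai$ into copies of itself make this trivial, whereas here the $j_\xi$'s must be extracted from the prewellordering underlying the partition, using the uniformization and scale properties of $\bGamma$ to keep all the maps inside $\F_\bGamma$.
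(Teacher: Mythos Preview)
Your (ii)--(iv) are essentially the paper's argument, modulo a slip in (ii): the family to feed into Lemma~\ref{lemmacentralrole} is $A_\xi:=A\cap D_\xi$, not the constant $A$ (with the constant family the lemma's hypothesis is already the desired conclusion). The genuine gap is in (i). The obstacle you flag --- manufacturing $\F_\bGamma$-embeddings $j_\xi\colon\RR\to D_\xi$ with $\F_\bGamma$-left inverses into the cells of a prewellordering-based partition --- is real, and the gesture toward ``uniformization and scales'' does not resolve it. But there is a second problem you do not flag: to assemble $f$ via superamenability you must first \emph{possess} a sequence $\seq{h_\xi}{\xi<\eta}$ of reductions $B_\xi\to\neg A$, and for uncountable $\eta<\bdelta_\bGamma$ no choice principle available under $\AD+\DC$ hands you one. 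This is precisely why Lemma~\ref{lemmacentralrole} is routed through the Coding Lemma and uniformization rather than through a bare appeal to superamenability. There is also a smaller gap in your ``$B_\xi\leq_\bGamma C$ for all $\xi$, whence $A\leq_\bGamma C$'': cofinality only gives $C\not<_\bGamma A$, which still allows $C\equiv_\bGamma\neg A$ when $A$ is nonselfdual; one needs the additional input $C\leq_\bGamma A$ to pin down $C\equiv_\bGamma A$.

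The paper dissolves all three issues at once by choosing the partition rather than accepting it. From any $\bDelta_\bGamma$-partition $\seq{D'_\xi}{\xi<\eta}$ of $\RR$ it sets $D_\xi=\{x\oplus y\mid y\in D'_\xi\}$ and $C_\xi=\{x\oplus y\in D_\xi\mid x\in A_\xi\}$, so that $C_\xi\equiv_\bGamma A_\xi$ via the Lipschitz maps $x\mapsto x\oplus y_\xi$ and $x\oplus y\mapsto x$ --- no $j_\xi$ needs to be built. Then both $C\leq_\bGamma A$ (from $C_\xi\equiv_\bGamma A_\xi<_\bGamma A$) and $C\leq_\bGamma\neg C$ (from $C_\xi<_\bGamma C$ and $\SLO^\bGamma$) come directly out of Lemma~\ref{lemmacentralrole}, so no individual reducing functions are ever chosen; combined with cofinality this yields $A\equiv_\bGamma C\leq_\bGamma\neg C\equiv_\bGamma\neg A$. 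Your outline can be repaired along the same lines: keep your $C$, replace the superamenability step by an application of (ii) to the family $C\cap D_\xi\leq_\bGamma\neg A$, and use the $\oplus$-trick for the partition so that the $j_\xi$'s become trivial.
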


\begin{proof}
For part \textit{i)}, let ${\rm cof}_\bGamma(A) = \eta < \bdelta_\bGamma$ and let
  $\{[A_\xi]_\bGamma \mid \xi < \eta \}$ be any family of degrees such
  that $A_\xi <_\bGamma A$ for every $\xi<\eta$ and such that for
  every $B$ for which $\forall \xi<\eta(A_\xi \leq_\bGamma B)$ we have
  that $B \nless_\bGamma A$.
  Let $\seq{D'_\xi}{\xi<\eta}$ be any $\bDelta_\bGamma$-partition of
  $\RR$ (which must exists by the observation preceding this
  theorem), and for $\xi < \eta$ define
$D_\xi = \{ x \oplus y \in \RR \mid y \in D'_\xi \}$,
$C_\xi = \{ x \oplus y \in D_\xi \mid x \in A_\xi\}$, and $C = \bigcup_{\xi < \eta} C_\xi$. Note that $C_\xi \subseteq D_\xi$ and $C_\xi \equiv_\bGamma A_\xi$ for every $\xi < \eta$.
It is clear that we can
assume $C_\xi \neq \RR$ for every $\xi<\eta$ and apply Lemma
\ref{lemmacentralrole} to the $\bDelta_\bGamma$-partition
$\seq{D_\xi}{\xi<\eta}$ and to the $C_\xi$'s to get $C \leq_\bGamma
A$. Conversely, for every $\xi<\eta$
\[ A_\xi \equiv_\bGamma C_\xi = C \cap D_\xi \leq_\bGamma C,\]
hence $A \leq_\bGamma C$ by our hypotheses (since otherwise $C
<_\bGamma A$). Thus it is enough to show that $C$ is
$\bGamma$-selfdual. To see this, observe that since $C_\xi
\equiv_\bGamma A_\xi <_\bGamma A \equiv_\bGamma C$ we have also $C_\xi
<_\bGamma \neg C$ by $\SLO^\bGamma$ (which follows from $\AD$): therefore we can apply Lemma \ref{lemmacentralrole}
again with $B = \neg C$ to get $C \leq_\bGamma \neg C$.

For part \textit{ii)} simply apply Lemma \ref{lemmacentralrole} with $A_\xi = A \cap
  D_\xi$ (for every $\xi<\eta$).

For part \textit{iii)}, assume that $[A]_\bGamma$ is limit (in particular, $A \notin
\Delta_{\F_\bGamma}$) and $A \leq_\bGamma \neg
  A$. By Theorem \ref{theorDP2} there is some $\eta<\bdelta_\bGamma$
  and a $\bDelta_\bGamma$-partition $\seq{D_\xi}{\xi<\eta}$ of $\RR$
  such that $A \cap D_\xi <_\bGamma A$ for every $\xi<\eta$. By part
  \textit{ii)}, $[A]_\bGamma$ is the supremum of the family
  $\mathcal{A} = \{ [A
  \cap D_\xi]_\bGamma \mid \xi<\eta\}$, and hence $\mathcal{A}$
  witnesses that $[A]_\bGamma$ is cofinality stricly less than
  $\bdelta_\bGamma$. Therefore, if $[A]_\bGamma$ is limit of
  cofinality greater than $\bdelta_\bGamma$ then $A \nleq_\bGamma \neg
  A$.

Finally, for part \textit{iv)} it is enough to prove that if $A$ and $B$ are two
  $\F_\bGamma$-selfdual
sets such that $A <_\bGamma B$ (which implies $B \notin
\Delta_{\F_\bGamma}$), then there is some $C$ such that $A
<_\bGamma C <_\bGamma B$. By Theorem \ref{theorDP2} again, there must
be some $\eta<\bdelta_\bGamma$ and some $\bDelta_\bGamma$-partition
$\seq{D_\xi}{\xi<\eta}$ of $\RR$ such that $B \cap D_\xi <_\bGamma B$
for every $\xi<\eta$. If $B \cap D_\xi \leq_\bGamma A$ for every $\xi
<\eta$, then we would have $B \leq_\bGamma A$ by part \textit{ii)}, a
contradiction! Hence there must be some $\xi_0 <\eta$ such that $B
\cap D_{\xi_0} \nleq_\bGamma A$, and by $\SLO^\bGamma$ and
$\F_\bGamma$-selfduality of $A$, we get $A<_\bGamma B \cap D_{\xi_0}
<_\bGamma B$.
\end{proof}

The previous theorem shows that if $\bGamma$ is tractable we can completely describe the hierarchy of the $\F_\bGamma$-degrees using Theorem \ref{theorstructure2} and
Theorem 3.1 of \cite{mottorosborelamenability}:
it is a well-founded preorder of length $\Theta$, nonselfdual pairs and selfdual
degrees alternate, at limit levels of cofinality strictly less than
$\bdelta_\bGamma$ there is a selfdual degree, while at limit levels of
cofinality equal or greater than $\bdelta_\bGamma$ there is a nonselfdual pair.
Thus the degree-structure infuced by $\F_\bGamma$ looks like this:

\begin{small}
\begin{equation}\label{picture}
\begin{array}{llllllllllllll}
\bullet & & \bullet & & \bullet & & & & \bullet & & & \bullet
\\
& \bullet & & \bullet & & \bullet & \cdots \cdots & \bullet
& & \bullet &\cdots\cdots & & \bullet & \cdots
\\
\bullet & & \bullet & & \bullet & & & & \bullet & & & \bullet
\\
& & & & & & &
\, \makebox[0pt]{$\stackrel{\uparrow}{\framebox{${\rm cof} <
\bdelta_\F$}}$}
& & & &
\, \makebox[0pt]{$\stackrel{\uparrow}{\framebox{${\rm cof}
\geq \bdelta_\F$}}$}
\end{array}
\end{equation}
\end{small}

Note that the previous picture is coherent with the description of the structure of the $\F$-degrees when $\F$ is Borel-amenable: in fact, as already observed, in that case we have $\bdelta_\F = \omega_1$, and therefore  picture \eqref{picture} coincides with the usual one.

\end{document}